\let\origsection=\section \def\section{\@ifstar{\origsection*}{\mysection}} 
\def\mysection{\@startsection{section}{1}\z@{.7\linespacing\@plus\linespacing}{.5\linespacing}{\normalfont\scshape\centering\S}}
\renewcommand{\PrintDOI}[1]{\doi{#1}}
\theoremstyle{plain}
\newtheorem{fact}{Fact}[section]
\newtheorem{thm}[fact]{Theorem}
\newtheorem{cor}[fact]{Corollary}
\newtheorem{lem}[fact]{Lemma}
\theoremstyle{definition}
\numberwithin{equation}{section}
\let\theta=\vartheta
\let\rho=\varrho
\let\phi=\varphi
\def\alabel{\upshape({\itshape \alph*\,})}
\def\qqand{\qquad\text{and}\qquad}
\newcommand{\symb}[2]{(#1\,|\,#2)}
\begin{document}
\title[On Kemnitz' Conjecture Concerning Lattice Points in the Plane]
{On Kemnitz' Conjecture Concerning Lattice Points in the Plane}

\author[Christian Reiher]{Christian Reiher}
\address{Fachbereich Mathematik, Universit\"at Hamburg, Hamburg, Germany}
\email{Christian.Reiher@uni-hamburg.de}

\begin{abstract}
In 1961, P. Erd\H os, A.~Ginzburg, and A.~Ziv proved a remarkable theorem stating 
that each set of $2n-1$ integers contains a subset of size $n$, the sum of whose 
elements is divisible by $n.$ We will prove a similar result for pairs of integers, 
i.e., planar lattice points, usually referred to as Kemnitz' conjecture.
\end{abstract}

\keywords{zero-sum subsequences, Kemnitz' conjecture}
\subjclass[2010]{11B50}

\maketitle

\section{Introduction}

Denoting by $f(n, k)$ the minimal number $f,$ such that any set of $f$ lattice points 
in the $k$-dimensional Euclidean space contains a subset of cardinality $n,$ the sum 
of whose elements is divisible by $n,$ it was first proved by P.~Erd\H{o}s, A.~Ginzburg, 
and A.~Ziv~\cite{EGZ}, that $f(n, 1)=2n-1$. 

The problem, however, to determine $f(n, 2)$ turned out to be unexpectedly difficult: 
A.~Kemnitz~\cite{Kem83} conjectured  it to equal $4n-3$ and knew, (1) 
that $4n-3$ is a rather straighforward lower bound\footnote[1]{In order to prove 
$f(n, 2)>4n-4$ one takes each of the four vertices of the unit square $n-1$ times.}, 
(2) that the set of all integers $n$ satisfying $f(n, 2)=4n-3$ is closed under multiplication 
and that it therefore suffices to prove this equation for prime values of $n$, and (3) 
that his assertion was correct for $n=2, 3, 5, 7$ and, consequently, also for every 
$n$ that is expressible as a product of these numbers. 

Linear upper bounds estimating $f(p, 2)$, where $p$ denotes any prime number, 
appeared for the first time in an article by N.~Alon and M.~Dubiner~\cite{AD95} 
who proved $f(p, 2)\le 6p-5$ for all $p$ and $f(p, 2)\le 5p-2$ for large $p$. 
Later this was improved to $f(p, 2) \le 4p-2$ by L.~R\'{o}nyai~\cite{Ro00}. 

In the third section of this article we prove Kemnitz' conjecture.

\section{Preliminary Results}

\subsection*{Notational conventions} 
In the sequel the letter $p$ is always assumed to designate an odd prime number
and congruence modulo $p$ is simply denoted by ``$\equiv$''. 
Roman capital letters (such as $J, X,\ldots$) will always stand for finite sets 
of lattice points in the Euclidean plane. 
The sum of the elements of such a set, taken coordinatewise, will be indicated by a 
preposed~``$\sum$''. 
Finally the symbol $\symb{n}{X}$ expresses the number of $n$-subsets of $X$, 
the sum of whose elements is divisible by $p$.

\bigskip

All propositions contained in this section are deduced without the use of 
combinatorial arguments from the following result due to Chevalley and Warning
(see e.g.,~\cite{Schm76}).

\begin{thm}
Let $P_1, P_2, \ldots, P_m\in F[x_1, \ldots,x_n]$ be some polynomials over a 
finite field~$F$ of characteristic $p$. Provided that the sum of their degrees 
is less than $n$, the number $\Omega$ of their common zeros in $F^n$ is 
divisible by $p$.
\end{thm}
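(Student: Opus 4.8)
The plan is to exhibit a single polynomial whose evaluation over all of $F^n$ simultaneously detects the common zeros and, by a degree count, must sum to zero. Write $q=|F|$, so that $q$ is a power of $p$ and every $a\in F$ satisfies $a^{q-1}=1$ precisely when $a\neq 0$. First I would introduce
$$
P=\prod_{i=1}^{m}\bigl(1-P_i^{\,q-1}\bigr)\in F[x_1,\ldots,x_n],
$$
and observe that for a point $x\in F^n$ the value $P(x)$ equals $1$ if $x$ is a common zero of all the $P_i$ and equals $0$ otherwise. Consequently $P$ is the indicator function of the common zero set, and summing over the whole space gives $\sum_{x\in F^n}P(x)=\Omega\cdot 1_F$; that is, this sum equals the image of $\Omega$ under the canonical map $\mathbb{Z}\to F$. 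It therefore suffices to show that the sum vanishes in $F$.

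The key step is a lemma on power sums over $F$. For a single variable I would verify that
$$
\sum_{a\in F}a^{e}=
\begin{cases}
-1 & \text{if } e>0 \text{ and } (q-1)\mid e,\\
\phantom{-}0 & \text{otherwise,}
\end{cases}
$$
the case $e=0$ being covered by $\sum_{a\in F}1=q=0$ in $F$, and the generic case following from the fact that $F^{\times}$ is cyclic of order $q-1$, so that $\sum_{a\in F^{\times}}a^{e}$ is a geometric sum which telescopes to $0$ unless $(q-1)\mid e$. Multiplying out, for a monomial $x_1^{e_1}\cdots x_n^{e_n}$ one then gets
$$
\sum_{x\in F^n}x_1^{e_1}\cdots x_n^{e_n}=\prod_{i=1}^{n}\Bigl(\sum_{a\in F}a^{e_i}\Bigr),
$$
and this product can be nonzero only when every exponent $e_i$ is a positive multiple of $q-1$, which forces the total degree $e_1+\cdots+e_n$ to be at least $n(q-1)$.

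To finish I would combine the two ingredients through a degree estimate. By hypothesis $\deg P_1+\cdots+\deg P_m<n$, so the polynomial $P$ has degree at most $(q-1)\sum_{i}\deg P_i<(q-1)n$. Hence every monomial occurring in $P$ has total degree strictly less than $n(q-1)$, and by the lemma each such monomial sums to $0$ over $F^n$. Summing term by term yields $\sum_{x\in F^n}P(x)=0$ in $F$, and comparing with the expression $\Omega\cdot 1_F$ from the first paragraph gives $p\mid\Omega$, as desired.

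I expect the main obstacle to be bookkeeping rather than conceptual: the whole argument hinges on choosing the indicator polynomial $P$ correctly and on the clean vanishing criterion for one-variable power sums, after which the degree count is automatic. The one point demanding care is the boundary case of the power-sum lemma, since the vanishing at $e=0$ relies on characteristic $p$ and not merely on $(q-1)\nmid e$; a misstatement there would wrongly admit low-degree monomials and break the claim that only monomials of degree at least $n(q-1)$ survive.
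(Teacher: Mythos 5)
Your proposal is correct and is exactly the paper's argument: both form the indicator polynomial $\prod_{i}(1-P_i^{\,q-1})$, identify its sum over $F^n$ with $\Omega$ modulo $p$, and kill every monomial via the power-sum vanishing criterion combined with the degree bound $(q-1)\sum_i\deg P_i<(q-1)n$. The paper merely states this in two lines; your write-up supplies the same steps in full detail, including the $e=0$ boundary case that the paper leaves implicit.
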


\begin{proof}
It is easy to see that 
\[ 
	\Omega\equiv\sum_{y_1, \ldots, y_n\in F}\quad\prod_{\mu=1}^{m}
	\bigl(1-P_{\mu}(y_1,\,\ldots\,y_n)^{q-1}\bigr)\,,
\]
where $q=|F|$. 
Expanding the product and taking into account that
\[
	\sum_{y\in F}y^r\equiv 0 \qquad \text{ holds whenever } 1\le r\le q-2\,,
\]
we get indeed $\Omega\equiv 0$.
\end{proof}

\begin{cor}\label{cor:1}
If $|J|=3p-3$, then $1-\symb{p-1}{J}-\symb{p}{J}+\symb{2p-1}{J}+\symb{2p}{J}\equiv 0$.
\end{cor}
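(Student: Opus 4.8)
The plan is to deduce the congruence directly from the Chevalley--Warning theorem by counting, over $F=\mathbb F_p$, the common zeros of a carefully chosen polynomial system in which a single auxiliary variable sieves out the relevant subset sizes. Write $J=\{P_1,\dots,P_{3p-3}\}$ with $P_i=(a_i,b_i)$, introduce indeterminates $x_1,\dots,x_{3p-3}$ together with one extra variable $w$, and consider
\[
	f_1=\sum_{i} a_i x_i^{p-1},\qquad
	f_2=\sum_{i} b_i x_i^{p-1},\qquad
	f_3=\sum_{i} x_i^{p-1}+w^{p-1}.
\]
Over $F$ each power $x_i^{p-1}$ equals $1$ when $x_i\neq0$ and $0$ when $x_i=0$, and likewise $w^{p-1}\in\{0,1\}$. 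Hence a tuple $(x,w)$ with support $S=\{i:x_i\neq0\}$ is a common zero of $f_1,f_2,f_3$ precisely when $\sum_{i\in S}P_i\equiv0$ and $|S|+w^{p-1}\equiv0$ in $F$.

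First I would record the degree bookkeeping. The system has $3p-2$ variables, whereas $\deg f_1+\deg f_2+\deg f_3=3(p-1)=3p-3$, which is smaller than the number of variables by exactly one; so the theorem of Chevalley and Warning applies and the number $\Omega$ of common zeros satisfies $\Omega\equiv0$. Next I would evaluate $\Omega$ combinatorially. For each admissible $S$ there are $(p-1)^{|S|}$ ways to assign the nonzero coordinates $x_i$, while the third equation forces $w^{p-1}\equiv-|S|$, so that $w$ may be chosen in exactly one way when $|S|\equiv0$ (namely $w=0$), in $p-1$ ways when $|S|\equiv-1$ (namely $w\neq0$), and not at all otherwise. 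Therefore
\[
	\Omega=\sum_{\substack{S:\ \sum_{i\in S}P_i\equiv0\\ |S|\equiv0}}(p-1)^{|S|}
	\;+\;\sum_{\substack{S:\ \sum_{i\in S}P_i\equiv0\\ |S|\equiv-1}}(p-1)^{|S|+1}.
\]

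Finally I would reduce modulo $p$. Using $p-1\equiv-1$, each weight $(p-1)^{|S|}$ becomes $(-1)^{|S|}$, and grouping the subsets by their exact cardinality turns the two sums into $\symb{0}{J}-\symb{p}{J}+\symb{2p}{J}$ and $-\bigl(\symb{p-1}{J}-\symb{2p-1}{J}\bigr)$, because the only cardinalities in $[0,3p-3]$ congruent to $0$ are $0,p,2p$ and the only ones congruent to $-1$ are $p-1,2p-1$ (here one uses that $p$ is odd, so that $(-1)^{p-1}=1$). Combining the pieces and substituting $\symb{0}{J}=1$ gives exactly $1-\symb{p-1}{J}-\symb{p}{J}+\symb{2p-1}{J}+\symb{2p}{J}\equiv0$. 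The one genuinely delicate point is the design of $f_3$: the exponent $p-1$ on $w$ is what makes $w^{p-1}$ an indicator, so that a single auxiliary variable isolates precisely the two residue classes $0$ and $-1$ with the correct relative multiplicity $1:(p-1)$, while keeping the total degree at $3p-3$, one below the variable count. Striking this balance is the crux; the remainder is formal.
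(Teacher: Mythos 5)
Your proposal is correct and is essentially the paper's own argument: you apply Chevalley--Warning to the identical system of three polynomials of degree $p-1$ in $3p-2$ variables (your $w$ is the paper's $x_{3p-2}$), and then count the common zeros, merely organizing the count by the support $S$ and its residue class rather than by the two cases $x_{3p-2}=0$ and $x_{3p-2}\neq 0$. The arithmetic reduction $(p-1)^{|S|}\equiv(-1)^{|S|}$ and the final bookkeeping match the paper's computation exactly.
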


\begin{proof}
Let $J=\bigl\{(a_n, b_n)\,\big|\, 1\le n\le 3p-3\bigr\}$ and apply 
the above theorem to 
\[
	\sum_{n=1}^{3p-3}x_n^{\,p-1}+x_{3p-2}^{\,p-1}\,,
	\sum_{n=1}^{3p-3}a_nx_n^{\,p-1} 
	\qqand 
	\sum_{n=1}^{3p-3}b_nx_n^{\,p-1}
\]
considered as polynomials over the field containing $p$ elements. 
Their common zeros fall into two classes depending on whether $x_{3p-2}=0$ or not. 
The first class consists of 
\[
	1+(p-1)^p\symb{p}{J}+(p-1)^{2p}\symb{2p}{J}
\]
solutions, whereas the second class includes 
\[
	(p-1)^p\symb{p-1}{J}+(p-1)^{2p}\symb{2p-1}{J}
\]
solutions. 
\end{proof}

The first of the following two assertions is proved quite 
analogously and entails the second one immediately.

\begin{cor} \label{cor:2a}
If $|J|=3p-2$ or $|J|=3p-1$, then $1-\symb{p}{J}+\symb{2p}{J}\equiv 0$.
\end{cor}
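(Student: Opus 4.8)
The plan is to mimic the proof of Corollary~\ref{cor:1}, adapting the polynomial system so that the Chevalley–Warning degree condition still applies. Since $|J|=3p-2$ or $3p-1$, let me write $J=\bigl\{(a_n,b_n)\,\big|\,1\le n\le |J|\bigr\}$ and consider the three polynomials
\[
	\sum_{n} x_n^{\,p-1}\,,\qquad \sum_{n} a_n x_n^{\,p-1}\,,\qquad \sum_{n} b_n x_n^{\,p-1}
\]
over the field with $p$ elements, now \emph{without} any auxiliary variable $x_{3p-2}$. The sum of the degrees is $3(p-1)=3p-3$, which is strictly less than the number of variables $|J|\ge 3p-2$, so Theorem~1.1 guarantees that the number of common zeros is divisible by $p$.

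Next I would count those common zeros directly in terms of the symbols $\symb{\cdot}{J}$. A tuple $(y_1,\ldots,y_{|J|})\in F^{|J|}$ is a common zero precisely when the subset $S=\{n : y_n\ne 0\}$ satisfies $|S|\equiv 0$, $\sum_{n\in S} a_n\equiv 0$, and $\sum_{n\in S} b_n\equiv 0$; that is, $S$ indexes a subset of $J$ whose size is divisible by $p$ and whose coordinatewise sum vanishes modulo $p$. Each nonzero entry $y_n$ can take any of the $p-1$ nonzero field values, so a subset of size $kp$ contributes $(p-1)^{kp}$ ordered solutions. Since $0\le |S|\le 3p-1$, the only sizes divisible by $p$ that can occur are $0$, $p$, and $2p$. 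The empty set gives the single solution $(0,\ldots,0)$, contributing $1$; the zero-sum $p$-subsets contribute $(p-1)^p\symb{p}{J}$; and the zero-sum $2p$-subsets contribute $(p-1)^{2p}\symb{2p}{J}$. Hence the total count is
\[
	1+(p-1)^{p}\symb{p}{J}+(p-1)^{2p}\symb{2p}{J}\,.
\]

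Reducing modulo $p$ via Fermat, $(p-1)^{p}\equiv(p-1)\equiv-1$ and $(p-1)^{2p}\equiv 1$, so the congruence from Theorem~1.1 becomes $1-\symb{p}{J}+\symb{2p}{J}\equiv 0$, which is the first assertion. The second assertion should follow immediately from this: the excerpt remarks that the first ``entails the second one immediately,'' and indeed the identity for $|J|=3p-1$ can be deduced from the one for $|J|=3p-2$ by summing over the $3p-1$ ways of deleting a single point, or alternatively both cases are covered at once by the uniform bound $|S|\le 3p-1$ used above.

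The main obstacle I anticipate is purely bookkeeping: one must be careful that the range of possible subset sizes is controlled correctly. The key point making the count clean is that for both $|J|=3p-2$ and $|J|=3p-1$ the largest multiple of $p$ not exceeding $|J|$ is $2p$, so no $\symb{3p}{J}$ term can appear and the expression truncates exactly after the $\symb{2p}{J}$ term. Verifying this size constraint, and confirming that the case $|J|=3p-1$ genuinely yields the same truncated expression as $|J|=3p-2$ rather than an extra term, is the one spot demanding attention; the algebraic reduction of the powers of $p-1$ is routine.
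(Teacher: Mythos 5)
Your proof is correct and is exactly the argument the paper intends when it says Corollary~\ref{cor:2a} is "proved quite analogously" to Corollary~\ref{cor:1}: drop the auxiliary variable, check the degree condition $3(p-1)<|J|$, and observe that the only admissible subset sizes are $0$, $p$, and $2p$. The only slight slip is interpretive rather than mathematical — the paper's phrase "entails the second one immediately" refers to Corollary~\ref{cor:2b}, not to the case $|J|=3p-1$ — but your uniform treatment of both cardinalities handles the statement at hand completely.
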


\begin{cor} \label{cor:2b}
If $|J|=3p-2$ or $|J|=3p-1$, then $\symb{p}{J}=0$ implies $\symb{2p}{J}\equiv -1$.
\end{cor}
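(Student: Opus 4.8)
The plan is to deduce this directly from Corollary~\ref{cor:2a}, as the remark preceding the statement already advertises. Under either cardinality hypothesis $|J|=3p-2$ or $|J|=3p-1$, Corollary~\ref{cor:2a} supplies the congruence
\[
    1-\symb{p}{J}+\symb{2p}{J}\equiv 0\,,
\]
so the entire task reduces to substituting the extra hypothesis $\symb{p}{J}=0$ into this single relation. The first step, then, is simply to invoke Corollary~\ref{cor:2a}; the second is to rewrite the congruence once the hypothesis is taken into account.

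The one point that deserves a moment's care is the passage from an equality of integers to a congruence modulo $p$. By definition $\symb{p}{J}$ counts those $p$-subsets of $J$ whose coordinatewise sum is divisible by $p$, so it is a non-negative integer, and the hypothesis $\symb{p}{J}=0$ asserts that there are literally no such subsets. In particular $\symb{p}{J}\equiv 0$, and feeding this into the displayed relation collapses it to $1+\symb{2p}{J}\equiv 0$, i.e.\ $\symb{2p}{J}\equiv -1$, which is exactly the assertion.

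I expect no genuine obstacle here: all of the substantive work has already been carried out in Corollary~\ref{cor:1} and in the Chevalley--Warning argument that analogously yields Corollary~\ref{cor:2a}, so the present statement is purely a bookkeeping consequence. Its likely purpose is to be combined later with the non-negativity of the integer $\symb{2p}{J}$: a count that is $\equiv -1$ modulo the odd prime $p$ must be at least $p-1>0$, whence a $2p$-subset with zero sum actually exists. That positivity, rather than the congruence in isolation, is presumably what the argument of the third section will exploit.
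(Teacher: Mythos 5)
Your proof is correct and is exactly what the paper intends: the text states that Corollary~\ref{cor:2a} ``entails the second one immediately,'' and your substitution of $\symb{p}{J}=0$ into $1-\symb{p}{J}+\symb{2p}{J}\equiv 0$ is precisely that immediate deduction. Your closing remark about how the congruence is later exploited via positivity also matches the paper's use of this corollary in Corollary~\ref{cor:3}.
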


Now we come to an important statement due to N.~Alon and M.~Dubiner~\cite{AD95}.

\begin{cor} \label{cor:3}
If $J$ contains exactly $3p$ elements whose sum is $\equiv(0,0)$, then $\symb{p}{J}>0$.
\end{cor}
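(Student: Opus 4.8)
The plan is to argue by contradiction: assume $\symb{p}{J}=0$ and derive a contradiction from Corollary~\ref{cor:2b}. The guiding observation is that two features of the situation pull against one another. On the one hand, the hypothesis that the $3p$ elements of $J$ sum to $(0,0)$ lets me pass freely between $p$-subsets and $2p$-subsets by taking complements, so that a vanishing count of zero-sum $p$-subsets must be mirrored by a vanishing count of zero-sum $2p$-subsets. On the other hand, Corollary~\ref{cor:2b} insists that once I shrink $J$ to $3p-1$ points the count of zero-sum $2p$-subsets is congruent to $-1$, hence nonzero. Reconciling these two facts is impossible, and that is the contradiction I am after.

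Concretely, I would proceed in two moves. First I record the complementation bijection: since $|J|=3p$ and $\sum J\equiv(0,0)$, the map $A\mapsto J\setminus A$ carries $p$-subsets to $2p$-subsets and preserves zero-sumness, because $\sum A+\sum(J\setminus A)=\sum J\equiv(0,0)$ forces $\sum A\equiv(0,0)$ exactly when $\sum(J\setminus A)\equiv(0,0)$. Consequently $\symb{p}{J}=\symb{2p}{J}$, and the assumption $\symb{p}{J}=0$ upgrades to $\symb{2p}{J}=0$ as well. Second I delete a single point: fix any $x\in J$ and set $J'=J\setminus\{x\}$, so $|J'|=3p-1$. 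Every zero-sum subset of $J'$ is in particular a zero-sum subset of $J$ (namely one avoiding $x$), so removing a point can only suppress such subsets; this gives $\symb{p}{J'}\le\symb{p}{J}=0$ and $\symb{2p}{J'}\le\symb{2p}{J}=0$, whence both symbols vanish. But $\symb{p}{J'}=0$ with $|J'|=3p-1$ is exactly the hypothesis of Corollary~\ref{cor:2b}, which forces $\symb{2p}{J'}\equiv-1$. Since $-1\not\equiv0$ for an odd prime $p$, this collides with $\symb{2p}{J'}=0$, and the contradiction yields $\symb{p}{J}>0$.

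The crux, as I see it, is not any computation but the coupling of the two monotonicity statements. The inequality $\symb{p}{J'}\le\symb{p}{J}$ is what lets Corollary~\ref{cor:2b} be invoked at all, while the parallel inequality $\symb{2p}{J'}\le\symb{2p}{J}$ is useless until the hypothesis $\sum J\equiv(0,0)$ has been converted, via complementation, into $\symb{2p}{J}=0$. In other words, the zero-sum hypothesis is precisely the ingredient that makes the vanishing of $\symb{p}{J}$ propagate all the way to $\symb{2p}{J'}$, where the rigid congruence of Corollary~\ref{cor:2b} then breaks. The only thing I would verify carefully is the elementary claim that deleting a point cannot increase either symbol, which is immediate since a zero-sum subset of $J'$ is automatically a zero-sum subset of $J$.
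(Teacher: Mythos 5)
Your proof is correct and follows essentially the same route as the paper: both arguments assume $\symb{p}{J}=0$, use the complementation bijection coming from $\sum J\equiv(0,0)$ to identify $\symb{p}{J}$ with $\symb{2p}{J}$, note that deleting a point cannot increase the symbol, and then derive a contradiction from Corollary~\ref{cor:2b} applied to the $(3p-1)$-element set. The paper merely arranges the contradiction in the opposite direction (concluding $\symb{2p}{J-\mathfrak{A}}>0$ and hence $\symb{p}{J}>0$), which is logically identical to your version.
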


\begin{proof} 
Let $\mathfrak{A}\in J$ be arbitrary. Arguing indirectly we assume that $\symb{p}{J}=0$. 
This obviously implies $\symb{p}{J-\mathfrak{A}}=0$ and owing to $|J-\mathfrak{A}|=3p-1$ 
the above Corollary~\ref{cor:2b} yields ${(2p, J-\mathfrak{A})\equiv -1}$. 
So in particular we have $\symb{2p}{J-\mathfrak{A}}>0$ and the 
condition $\sum J\equiv (0,0)$ entails indeed 
$\symb{p}{J}=\symb{2p}{J}\ge \symb{2p}{J-\mathfrak{A}}>0$.
\end{proof}

The next two statements are similar to Corollary~\ref{cor:2a} and may also be 
proved in the same manner.

\begin{cor}\label{cor:4}
If $|X|=4p-3,$ then 
\begin{enumerate}[label=\alabel]
\item\label{it:4a} $-1+\symb{p}{X}-\symb{2p}{X}+\symb{3p}{X}\equiv 0$ 
\item\label{it:4b} and $\symb{p-1}{X}-\symb{2p-1}{X}+\symb{3p-1}{X}\equiv 0$.
\end{enumerate}
\end{cor}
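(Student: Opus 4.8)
The plan is to imitate the proof of Corollary~\ref{cor:2a} twice, once without and once with an auxiliary variable, exactly as the passage from Corollary~\ref{cor:2a} to Corollary~\ref{cor:1} suggests. For part~\ref{it:4a} I would write $X=\{(a_n,b_n)\mid 1\le n\le 4p-3\}$ and apply the Chevalley--Warning theorem to the three polynomials $\sum_{n=1}^{4p-3}x_n^{\,p-1}$, $\sum_{n=1}^{4p-3}a_nx_n^{\,p-1}$, and $\sum_{n=1}^{4p-3}b_nx_n^{\,p-1}$ in the $4p-3$ variables $x_1,\ldots,x_{4p-3}$ over the field with $p$ elements; their degrees sum to $3p-3<4p-3$, so the number $\Omega$ of common zeros satisfies $\Omega\equiv 0$. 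Since $y^{p-1}$ is the indicator of $y\ne 0$, a common zero is determined by its support $S\subseteq\{1,\ldots,4p-3\}$, which must satisfy $|S|\equiv 0$ together with $\sum_{n\in S}(a_n,b_n)\equiv(0,0)$, and by a free choice among the $p-1$ nonzero field elements at each position of $S$. Hence each admissible $S$ with $|S|=k$ contributes $(p-1)^k$ zeros.

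The bookkeeping step is to list the admissible sizes: because $0\le|S|\le 4p-3$ with $|S|\equiv 0\pmod p$, and because both $4p$ and $4p-1$ exceed $4p-3$ while $3p\le 4p-3$ holds for every odd prime, the only possibilities are $|S|\in\{0,p,2p,3p\}$. Collecting contributions and using $(p-1)^p\equiv -1$, $(p-1)^{2p}\equiv 1$, and $(p-1)^{3p}\equiv -1$ turns $\Omega\equiv 0$ into $1-\symb{p}{X}+\symb{2p}{X}-\symb{3p}{X}\equiv 0$, which is part~\ref{it:4a} after multiplication by $-1$.

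For part~\ref{it:4b} I would adjoin one auxiliary variable $x_{4p-2}$ and replace the first polynomial by $\sum_{n=1}^{4p-3}x_n^{\,p-1}+x_{4p-2}^{\,p-1}$, keeping the other two unchanged; there are now $4p-2$ variables while the degree sum $3p-3$ is still smaller. Splitting the common zeros according to whether $x_{4p-2}$ vanishes, the case $x_{4p-2}=0$ reproduces exactly the sizes $\{0,p,2p,3p\}$ of part~\ref{it:4a}, whereas the case $x_{4p-2}\ne 0$ forces $|S|\equiv -1$, hence $|S|\in\{p-1,2p-1,3p-1\}$, each such support now carrying an extra factor $p-1$ from the choice of $x_{4p-2}$. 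The resulting congruence reads
\[
 \bigl(1-\symb{p}{X}+\symb{2p}{X}-\symb{3p}{X}\bigr)
 +\bigl(-\symb{p-1}{X}+\symb{2p-1}{X}-\symb{3p-1}{X}\bigr)\equiv 0,
\]
and subtracting the relation already established in part~\ref{it:4a} leaves precisely part~\ref{it:4b}.

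The only genuinely delicate point I anticipate is this size analysis: one must check that the cardinality $4p-3$ is large enough to admit the support size $3p$ (equivalently $3p-1$) yet small enough to exclude $4p$ (equivalently $4p-1$), since it is exactly this truncation of the admissible sizes that fixes which symbols appear in the two congruences. Everything else is the routine replacement of the powers of $p-1$ by the appropriate signs.
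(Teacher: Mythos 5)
Your proof is correct and is exactly the argument the paper intends: it leaves Corollary~\ref{cor:4} unproved with the remark that it is ``similar to Corollary~\ref{cor:2a} and may also be proved in the same manner,'' and your two applications of Chevalley--Warning (without and with the auxiliary variable, mirroring Corollaries~\ref{cor:2a} and~\ref{cor:1}) carry that out faithfully, including the correct verification that the admissible support sizes are $\{0,p,2p,3p\}$ and $\{p-1,2p-1,3p-1\}$ and the sign bookkeeping via $(p-1)^{kp}\equiv(-1)^k$.
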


\begin{cor}\label{cor:5}
If $|X|=4p-3$, then $3-2\symb{p-1}{X}-2\symb{p}{X}+\symb{2p-1}{X}+\symb{2p}{X}\equiv 0$.
\end{cor}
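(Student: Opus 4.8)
The plan is to obtain this congruence not from a fresh application of the Chevalley--Warning theorem, but by summing the relation of Corollary~\ref{cor:1} over all large subsets of $X$. Explicitly, I would let $J$ range over every subset of $X$ with $|J|=3p-3$ and add up the corresponding instances of
\[
    1-\symb{p-1}{J}-\symb{p}{J}+\symb{2p-1}{J}+\symb{2p}{J}\equiv 0 .
\]
Since $|X|=4p-3$, each such $J$ is obtained by deleting a $p$-element subset of $X$, so there are $\binom{4p-3}{p}$ summands.

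The device that makes this work is a double-counting identity feeding the quantities $\symb{k}{J}$ back into the $\symb{k}{X}$. Fixing $k$ and interchanging the order of summation, a $k$-subset $U\subseteq X$ with $\sum U\equiv(0,0)$ is counted once for every $J$ with $U\subseteq J$ and $|J|=3p-3$, and the number of such $J$ is $\binom{4p-3-k}{3p-3-k}=\binom{4p-3-k}{p}$, independently of $U$. Hence
\[
    \sum_{\substack{J\subseteq X\\ |J|=3p-3}}\symb{k}{J}=\binom{4p-3-k}{p}\,\symb{k}{X}.
\]
Applying this for $k=0,p-1,p,2p-1,2p$ turns the summed congruence into $\binom{4p-3}{p}$, with $\symb{p-1}{X}$ and $\symb{p}{X}$ weighted by $-\binom{3p-2}{p}$ and $-\binom{3p-3}{p}$, and $\symb{2p-1}{X}$, $\symb{2p}{X}$ weighted by $\binom{2p-2}{p}$ and $\binom{2p-3}{p}$.

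It then remains to reduce these five binomial coefficients modulo $p$, which I would do by Lucas' theorem after reading off the base-$p$ digits, e.g. $4p-3=3\cdot p+(p-3)$, $3p-2=2\cdot p+(p-2)$, and so on. This gives $\binom{4p-3}{p}\equiv 3$, $\binom{3p-2}{p}\equiv\binom{3p-3}{p}\equiv 2$, and $\binom{2p-2}{p}\equiv\binom{2p-3}{p}\equiv 1$, so the whole sum collapses to exactly
\[
    3-2\symb{p-1}{X}-2\symb{p}{X}+\symb{2p-1}{X}+\symb{2p}{X}\equiv 0 .
\]
The genuinely delicate point is this last reduction, and it also explains why a single direct count will not suffice: a Chevalley--Warning system of the earlier type assigns to size $s+p$ the negative of the coefficient at size $s$, which cannot produce the constant $3$ facing the two $-2$'s. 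It is precisely the size-dependent leading digits $\binom{3}{1},\binom{2}{1},\binom{1}{1}$ surfacing through the summation that break this symmetry and deliver the stated coefficients. (For $p=3$ the expansion of $4p-3$ reads off differently, but since $3\equiv 0$ the same congruence results.)
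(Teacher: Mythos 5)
Your proposal is correct and follows essentially the same route as the paper: both sum the congruence of Corollary~\ref{cor:1} over all $(3p-3)$-subsets of $X$, use the double-counting identity $\sum_J\symb{k}{J}=\binom{4p-3-k}{3p-3-k}\symb{k}{X}$, and reduce the resulting binomial coefficients modulo $p$ (the paper writes them as $\binom{3p-2}{2p-2}$ etc., which agree with your $\binom{3p-2}{p}$ etc.). Your Lucas computations, including the remark that the case $p=3$ still yields the constant $3\equiv 0$, are accurate.
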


\begin{proof} 
Corollary~\ref{cor:1} implies 
\[
	\sum_I \bigl[ 1- \symb{p-1}{I}-\symb{p}{I}+\symb{2p-1}{I}+\symb{2p}{I}\bigr] \equiv 0\,,
\]
where the sum is extended over all $I\subseteq X$ of cardinality $3p-3$.
Analysing the number of times each set is counted one obtains 
\begin{align*}
	\binom{4p-3}{3p-3}&-\binom{3p-2}{2p-2}\symb{p-1}{X}-\binom{3p-3}{2p-3}\symb{p}{X} \\
	&+ \binom{2p-2}{p-2}\symb{2p-1}{X}+\binom{2p-3}{p-3}\symb{2p}{X}\equiv 0\,.
\end{align*}
The reduction of the binomial coefficients modulo $p$ leads directly to the claim. 
\end{proof}

\section{Resolution of Kemnitz' Conjecture}

\begin{lem}
If $|X|=4p-3$ and $\symb{p}{X}=0$, then $\symb{p-1}{X}\equiv \symb{3p-1}{X}$.
\end{lem}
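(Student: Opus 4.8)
The plan is to reduce the assertion to a single congruence about the number of zero-sum $2p$-subsets, and then to evaluate that number. First I would feed in part~\ref{it:4b} of Corollary~\ref{cor:4}, namely $\symb{p-1}{X}-\symb{2p-1}{X}+\symb{3p-1}{X}\equiv 0$, which rearranges to $\symb{2p-1}{X}\equiv\symb{p-1}{X}+\symb{3p-1}{X}$. Hence the desired equality $\symb{p-1}{X}\equiv\symb{3p-1}{X}$ holds if and only if $\symb{2p-1}{X}\equiv 2\symb{p-1}{X}$, so the whole problem becomes one of controlling $\symb{2p-1}{X}$ in terms of $\symb{p-1}{X}$.

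Next I would bring in Corollary~\ref{cor:5}. Because $\symb{p}{X}=0$ by hypothesis it degenerates to $3-2\symb{p-1}{X}+\symb{2p-1}{X}+\symb{2p}{X}\equiv 0$, that is, $\symb{2p-1}{X}\equiv 2\symb{p-1}{X}-3-\symb{2p}{X}$. Comparing this with the reformulation from the previous step, everything collapses to the single statement $\symb{2p}{X}\equiv -3\pmod p$. Thus, under the standing hypothesis, the lemma is entirely equivalent to determining the residue modulo $p$ of the number of zero-sum $2p$-subsets of $X$.

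The hard part will be exactly this evaluation of $\symb{2p}{X}$. The natural strategy mirrors the derivation of Corollary~\ref{cor:5}: average a ``size divisible by $p$'' congruence of the shape $1-\symb{p}{J}+\symb{2p}{J}\equiv 0$ over all $J\subseteq X$ of a fixed cardinality, and reduce the resulting binomial coefficients modulo $p$. One is driven to the cardinality $|J|=3p-3$, since this is the only level at which the constant term survives the reduction — indeed $\binom{4p-3}{3p-3}\equiv 3$, whereas the corresponding coefficients $\binom{4p-3}{3p-2}$ and $\binom{4p-3}{3p-1}$ vanish — and summing $1-\symb{p}{J}+\symb{2p}{J}\equiv 0$ over the $(3p-3)$-subsets would give $3-2\symb{p}{X}+\symb{2p}{X}\equiv 0$, hence $\symb{2p}{X}\equiv -3$ once $\symb{p}{X}=0$ is inserted. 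The obstacle is that $|J|=3p-3$ is precisely the cardinality at which Chevalley--Warning fails for three polynomials of degree $p-1$ (this is why Corollary~\ref{cor:2a} is stated only for $3p-2$ and $3p-1$), so the congruence $1-\symb{p}{J}+\symb{2p}{J}\equiv 0$ is not available for free there. I therefore expect the crux to be the use of the hypothesis $\symb{p}{X}=0$ to recover this missing size-$0$ relation at the forbidden level. Concretely, Corollary~\ref{cor:1} \emph{is} valid at $|J|=3p-3$, where it bundles the size-$0$ and size-$(-1)$ contributions into $1-\symb{p-1}{J}-\symb{p}{J}+\symb{2p-1}{J}+\symb{2p}{J}\equiv 0$; isolating $1-\symb{p}{J}+\symb{2p}{J}\equiv 0$ then amounts to establishing the auxiliary fact $\symb{p-1}{J}\equiv\symb{2p-1}{J}$ for every $(3p-3)$-subset $J$. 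Deducing this auxiliary congruence from $\symb{p}{X}=0$ — for instance by passing to the $(3p-2)$-supersets $J\cup\{v\}$, where Chevalley--Warning does apply and yields $\symb{p-1}{J\cup\{v\}}\equiv\symb{2p-1}{J\cup\{v\}}$, and then descending to $J$ — together with the attendant Lucas-type binomial reductions, is where the real substance of the proof lies.
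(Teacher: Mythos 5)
Your two reductions are correct: combining Corollary~\ref{cor:4}\ref{it:4b} with Corollary~\ref{cor:5} under the hypothesis $\symb{p}{X}=0$ does show that the lemma is equivalent to $\symb{2p}{X}\equiv -3$. But the proposal stalls exactly where you say the substance lies, and the route you sketch for that step cannot close. The aggregate form of your auxiliary fact is circular: summing Corollary~\ref{cor:1} over all $(3p-3)$-subsets $J$ of $X$ is precisely how Corollary~\ref{cor:5} was obtained, so the gap between the relation you want, $\sum_J\bigl[1-\symb{p}{J}+\symb{2p}{J}\bigr]\equiv 0$, and the relation you already have is $\sum_J\bigl[\symb{2p-1}{J}-\symb{p-1}{J}\bigr]\equiv \symb{2p-1}{X}-2\symb{p-1}{X}$, which by your own first step is exactly the statement being proved. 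To gain anything you would therefore need the pointwise claim $\symb{p-1}{J}\equiv\symb{2p-1}{J}$ for each individual $(3p-3)$-set $J$, and the proposed descent from the supersets $J\cup\{v\}$ does not deliver it: writing $\symb{p-1}{J\cup\{v\}}=\symb{p-1}{J}+N_{p-2}(v)$, where $N_k(v)$ counts the $k$-subsets $T\subseteq J$ with $\sum T\equiv -v$, each superset only gives $\symb{p-1}{J}-\symb{2p-1}{J}\equiv N_{2p-2}(v)-N_{p-2}(v)$ with an uncontrolled right-hand side, while summing over the $p$ choices of $v\in X\setminus J$ multiplies the terms you care about by $p\equiv 0$ and collapses to $0\equiv 0$. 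In particular, nothing in the sketch brings the hypothesis $\symb{p}{X}=0$ to bear in a non-circular way, so the crux of the lemma remains unproved.

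For comparison, the paper's proof takes a genuinely different path: it double counts the partitions $X=A\cup B\cup C$ with $|A|=p-1$, $|B|=p-2$, $|C|=2p$ and $\sum A\equiv\sum C\equiv(0,0)$. Fixing an admissible $A$ first, the complement $X-A$ has $3p-2$ elements and inherits $\symb{p}{X-A}=0$, so Corollary~\ref{cor:2b} gives $\symb{2p}{X-A}\equiv-1$ and the total count is $\equiv-\symb{p-1}{X}$; fixing $B$ first gives $\equiv-\symb{3p-1}{X}$ by the same token applied to the $(3p-1)$-element complements. That is where the hypothesis genuinely enters, and it is the ingredient your outline is missing.
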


\begin{proof} 
Let $\chi$ denote the number of partitions $X=A\cup B\cup C$ satisfying
\[
	|A|=p-1, \qquad |B|=p-2, \qquad |C|=2p\,,
\]
and moreover
\[
	\sum A\equiv (0,0), \qquad \sum B\equiv \sum X, \qquad \sum C\equiv (0,0)\,.
\]
To determine $\chi$, at least modulo $p$, we first run through all admissible $A$ 
and employing Corollary~\ref{cor:2b} we count for each of them how many possibilities
for $B$ are contained in its complement, thus getting
\[
	\chi\equiv\sum_A\symb{2p}{X-A}\equiv\sum_A -1\equiv -\symb{p-1}{X}\,.
\]
Working the other way around we infer similarly 
\[
	\chi\equiv\sum_B\symb{2p}{X-B}\equiv\sum_{X-B} -1\equiv -\symb{3p-1}{X}\,.
\]
Therefore indeed, by counting the same entities twice, $\symb{p-1}{X}\equiv \symb{3p-1}{X}$. 
\end{proof}

\begin{thm}
Any choice of $4p-3$ lattice--points in the plane contains a subset of 
cardinality $p$ whose centroid is a lattice-point as well.
\end{thm}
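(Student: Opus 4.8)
The plan is to argue by contradiction: suppose $\symb{p}{X}=0$, so that no $p$-subset of the given set $X$ has coordinate-sum divisible by $p$, and then derive a contradiction purely from the congruences already assembled in the previous section. With $|X|=4p-3$ fixed, I would regard the identities proved above as a small linear system modulo $p$ in the unknowns $\symb{p-1}{X}$, $\symb{2p-1}{X}$, $\symb{2p}{X}$, $\symb{3p-1}{X}$, $\symb{3p}{X}$ (with $\symb{p}{X}$ set to $0$), and solve it for $\symb{3p}{X}$.

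Concretely, I would first feed $\symb{p}{X}=0$ into the Lemma to get $\symb{p-1}{X}\equiv\symb{3p-1}{X}$, and then combine this with Corollary~\ref{cor:4}\ref{it:4b} to obtain $\symb{2p-1}{X}\equiv 2\symb{p-1}{X}$. Substituting the latter into Corollary~\ref{cor:5} makes the $\symb{p-1}{X}$–terms cancel outright, leaving $\symb{2p}{X}\equiv -3$. Finally Corollary~\ref{cor:4}\ref{it:4a} converts this into $\symb{3p}{X}\equiv -2$. Since $p$ is odd we have $-2\not\equiv 0$, so $\symb{3p}{X}>0$; in other words $X$ genuinely contains a $3p$-subset $J$ with $\sum J\equiv(0,0)$.

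The decisive step is then to invoke the Alon--Dubiner result, Corollary~\ref{cor:3}: applied to this $J$, which has exactly $3p$ elements summing to zero, it yields $\symb{p}{J}>0$. But every $p$-subset of $J$ is a $p$-subset of $X$, so $\symb{p}{X}\ge\symb{p}{J}>0$, contradicting our standing assumption. Hence $\symb{p}{X}>0$, which is precisely the assertion that $X$ contains a $p$-subset whose centroid is again a lattice point, and the theorem follows.

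I expect the only real subtlety to lie in the bookkeeping of this linear combination — selecting exactly the right corollaries and applying them in the right order so that every unknown except $\symb{3p}{X}$ (equivalently $\symb{2p}{X}$) is eliminated — rather than in any genuinely new idea, since all the hard inputs (Chevalley--Warning, the counting identities of Corollaries~\ref{cor:4} and~\ref{cor:5}, the Lemma, and above all Corollary~\ref{cor:3}) are already available. One small point worth flagging is that the argument rests on $-2\not\equiv 0$, which is exactly where the standing hypothesis that $p$ is odd enters; the excluded prime $p=2$ corresponds to a case that can be settled by direct inspection.
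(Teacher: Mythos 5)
Your argument is correct and is essentially the paper's own proof: the paper simply adds Corollaries~\ref{cor:4}\ref{it:4a}, \ref{cor:4}\ref{it:4b}, \ref{cor:5} and the Lemma in one stroke to get $2-\symb{p}{X}+\symb{3p}{X}\equiv 0$, whereas you perform the same elimination by successive substitution (arriving at $\symb{2p}{X}\equiv-3$ and $\symb{3p}{X}\equiv-2$ under the assumption $\symb{p}{X}=0$), and both versions finish identically via Corollary~\ref{cor:3}. The bookkeeping in your substitutions checks out, and your remark that oddness of $p$ is needed for $-2\not\equiv 0$ matches the paper's closing observation.
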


\begin{proof}
Adding up the congruences obtained in the Corollaries~\ref{cor:4}\ref{it:4a}, 
\ref{cor:4}\ref{it:4b},~\ref{cor:5}, and the previous lemma one deduces 
$2-\symb{p}{X}+\symb{3p}{X}\equiv 0$. 
Since $p$ is odd, this implies that $\symb{p}{X}$ and $\symb{3p}{X}$ cannot vanish 
simultaneously which in turn yields our assertion $\symb{p}{X}\ne 0$ via 
Corollary~\ref{cor:3} 
\end{proof}

As Kemnitz~\cite{Kem83} remarked, for $p=2$ the above result is an easy consequence 
of the box-principle. Since according to fact (1) mentioned in the introduction the general 
statement $f(n, 2)=4n-3$ (for every positive integer $n$) follows immediately from the 
special case where $n$ is a prime number, we have thereby proved Kemnitz' conjecture. 

\begin{bibdiv}
\begin{biblist}

\bib{AD95}{article}{
   author={Alon, Noga},
   author={Dubiner, Moshe},
   title={A lattice point problem and additive number theory},
   journal={Combinatorica},
   volume={15},
   date={1995},
   number={3},
   pages={301--309},
   issn={0209-9683},
   review={\MR{1357277}},
   doi={10.1007/BF01299737},
}

\bib{EGZ}{article}{
   author={Erd\H{o}s, P},
   author={Ginzburg, A},
   author={Ziv, A},
   title={Theorem in the additive number theory},
   journal={Bull Research Council Israel},
   volume={10F},
   date={1961},
      pages={41--43},
         }

\bib{Kem83}{article}{
   author={Kemnitz, Arnfried},
   title={On a lattice point problem},
   journal={Ars Combin.},
   volume={16},
   date={1983},
   number={B},
   pages={151--160},
   issn={0381-7032},
   review={\MR{737118}},
}

\bib{Ro00}{article}{
   author={R{\'o}nyai, Lajos},
   title={On a conjecture of Kemnitz},
   journal={Combinatorica},
   volume={20},
   date={2000},
   number={4},
   pages={569--573},
   issn={0209-9683},
   review={\MR{1804827}},
   doi={10.1007/s004930070008},
}

\bib{Schm76}{book}{
   author={Schmidt, Wolfgang M.},
   title={Equations over finite fields. An elementary approach},
   series={Lecture Notes in Mathematics, Vol. 536},
   publisher={Springer-Verlag, Berlin-New York},
   date={1976},
   pages={ix+276},
   review={\MR{0429733}},
}

\end{biblist}
\end{bibdiv}
\end{document}